\def\@settitle{\begin{center}\baselineskip14\p@\relax\bfseries{\large\@title}\thispagestyle{empty}\end{center}}
\def\@setauthors{%
  \begingroup
  \def\thanks{\protect\thanks@warning}%
  \trivlist
  \centering\footnotesize \@topsep30\p@\relax
  \advance\@topsep by -\baselineskip
  \item\relax
  \author@andify\authors
  \def\\{\protect\linebreak}%
  {\authors}%
  \ifx\@empty\contribs
  \else
    ,\penalty-3 \space \@setcontribs
    \@closetoccontribs
  \fi
  \endtrivlist
  \endgroup
}
\def\maketitle{\par
  \@topnum\z@ 
  \@setcopyright
  \thispagestyle{firstpage}
  \ifx\@empty\shortauthors \let\shortauthors\shorttitle
  \else \andify\shortauthors
  \fi
  \@maketitle@hook
  \begingroup
  \@maketitle
  \toks@\@xp{\shortauthors}\@temptokena\@xp{\shorttitle}%
  \toks4{\def\\{ \ignorespaces}}
  \edef\@tempa{%
    \@nx\markboth{\the\toks4
      \@nx
      {\the\toks@}}{\the\@temptokena}}%
  \@tempa
  \endgroup
  \c@footnote\z@
  \@cleartopmattertags
}
\newtheorem{proposition}{proposition}[section]
\newtheorem{theorem}[proposition]{Theorem}
\newtheorem{lemma}[proposition]{Lemma}
\newtheorem{corollary}[proposition]{Corollary}
\theoremstyle{definition}
\newtheorem{example}[proposition]{Example}
\newtheorem{remark}[proposition]{Remark}
\newcommand{\Q}{\mathbb Q}
\newcommand{\R}{\mathbb R}
\newcommand{\C}{\mathbb C}
\newcommand{\ce}{c_{\ue}}
\newcommand{\Digit}{\mathcal{A}}
\newcommand{\Fourier}[1][f]{\mathcal{F}[#1]}
\newcommand{\rapid}[1][V]{\mathscr{S}(#1)}
\newcommand{\lzf}[3][\ue]{\Phi_{#1}(#2;\,#3)}
\newcommand{\dlzf}[3][\ud]{\Phi^*_{#1}(#2;\,#3)}
\newcommand{\vlzf}[2]{\bs{\Phi}(#1;\,#2)}
\newcommand{\vdlzf}[2]{\bs{\Phi}^*(#1;\,#2)}
\newcommand{\clzf}[2]{\bs{\Psi}(#1;\,#2)}
\newcommand{\cdlzf}[2]{\bs{\Psi}^*(#1;\,#2)}
\newcommand{\sign}[1][\ul{a}]{(\sqrt{-1})^{|#1|}}
\newcommand{\pmat}[1]{\begin{pmatrix} #1 \end{pmatrix}}
\newcommand{\set}[2]{\left\{#1 ;\; #2 \right\}}
\newcommand{\transpose}[1]{\mskip2.4 mu {}^t \mskip -2.5 mu #1}
\newcommand{\s}{\ul{s}}
\newcommand{\efactor}{\mathcal{E}}
\newcommand{\w}{\ul{w}}
\newcommand{\kvsup}[1]{\kappa_{\ue}(#1)}
\newcommand{\localZ}[2][\ul{b}]{Z_{#1}(#2;\,\ul{s})}
\newcommand{\dlocalZ}[2][\ul{c}]{Z^*_{#1}(#2;\,\ul{s})}
\newcommand{\ul}[1]{\underline{#1}}
\newcommand{\ud}{\boldsymbol{\delta}}
\newcommand{\ue}{\boldsymbol{\varepsilon}}
\newcommand{\bs}[1]{\boldsymbol{#1}}
\newcommand{\Ir}[1][r]{\mathcal{I}_{#1}}
\newcommand{\Oe}[1][\ue]{\mathcal{O}_{#1}}
\newcommand{\kv}[1][\ul{a}]{\bs{\kappa}(#1)}
\newcommand{\kvsub}[2][\ul{a}]{\kappa_{#2}(#1)}
\newcommand{\pcos}[2][\ul{a}]{Q_{#1}(#2)}
\newcommand{\Pcos}[2][\ul{a}]{Q_{#1}{#2}}
\newcommand{\uluelement}[1][j]{w_j}
\newcommand{\innV}[2]
	{\left\langle \mskip2.0 mu #1 \mskip2.0 mu | 
	\mskip2.0 mu #2 \mskip2.0 mu \right\rangle}
\title
{
Completion of local zeta functions associated with a certain class of homogeneous cones}
\author[H.\ Nakashima]
{Hideto Nakashima}
\address{Graduate School of Mathematics,  
Nagoya University\\ 
Furo-cho, Chikusa-ku, Nagoya 464-8602, 
Japan
}
\email{h-nakashima@math.nagoya-u.ac.jp}
\keywords{
Zeta functions,
functional equations,
prehomogeneous vector spaces,
homogeneous cones.
}
\subjclass[2010]{
11M41, 
11S90, 
22E25. 
}
\begin{document}

\begin{abstract}
It is well known that the Riemann zeta function 
can be completed to the Riemann xi function $\xi(s)$
in the sense that its functional equation has a higher symmetric form
$\xi(1-s)=\xi(s)$.
In the previous paper (to appear in Tohoku Math.\ J.),
we give an explicit formula of functional equations between local and global zeta functions associated with a homogeneous cone and with its dual cone.
In this paper, we consider a completion of these local zeta functions and
show that, for a certain class of homogeneous cones,
the associated local zeta functions admit a kind of completion forms.
\end{abstract}

\maketitle

\section{Introduction}

A theory of prehomogeneous vector spaces, constructed by Sato~\cite{MSato} (see also Sato--Shintani~\cite{MSatoShintani}, Kimura~\cite[Introduction]{Kimura}),
provides a systematic method to construct zeta functions satisfying functional equations.
The Riemann zeta function $\zeta(s)$ can be viewed as a typical example of such zeta functions, 
and it has many interesting and important properties.
Among them, we focus on the functional equation and its completion, that is,
$\zeta(s)$ can be completed to the Riemann xi function 
$\xi(s):=
2^{-1}s(s-1)
\pi^{-s/2}\Gamma(s/2)\zeta(s)$
whose functional equation has a higher symmetric form
$\xi(1-s)=\xi(s)$.
Here, $\Gamma(s)$ is the ordinary gamma function.
Therefore, as analogous to $\zeta(s)$,
it is natural to ask whether or not
zeta functions associated with prehomogeneous vector spaces can be completed.
Let us state this problem more precisely.
Let $\bs{\zeta}(\s)$ and $\bs{\zeta}^*(\s)$ $(\s\in\C^r)$ be
vector-valued zeta functions associated with a regular prehomogeneous vector space
and with its dual prehomogeneous vector space, respectively.
Then, a functional equation between these zeta functions is written as
\[\bs{\zeta}^*(\tau(\s))=A(\s)\bs{\zeta}(\s),\]
where 
$\tau$ is a suitable affine transformation on $\C^r$ 
and $A(\s)$ is a suitable coefficient matrix.
For
these zeta functions $\bs{\zeta}(\s)$ and $\bs{\zeta}^*(\s)$,
let us find square matrices $B(\s)$ and $B^*(\s)$ such that
\[\bs{\xi}^*(\tau(\s))=\efactor\bs{\xi}(\s)\quad\text{where}\quad
\bs{\xi}(\s):=B(\s)\bs{\zeta}(\s)\text{ and }\bs{\xi}^*(\s):=B^*(\s)\bs{\zeta}^*(\s),\]
where $\efactor$ is the so-called $\varepsilon$-factor,
that is, a diagonal matrix with entries $\exp(\theta\sqrt{-1})$ $(\theta\in\R)$.
Since $B(\ul{s})$ and $B^*(\ul{s})$ may be different matrices,
there is a trivial solution $B(\s)=A(\s)$, $B^*(\s)=I$ and $\efactor=I$ ($I$ is the identity matrix)
and hence we would like to make $B(\s)$ and $B^*(\s)$ as similar as possible.
For such cases,
we say that a pair $(\bs{\zeta}(\s),\bs{\zeta}^*(\s))$ can be completed to $(\bs{\xi}(\s),\bs{\xi}^*(\s))$,
or the pair $(\bs{\xi}(\s),\bs{\xi}^*(\s))$ is a completion of $(\bs{\zeta}(\s),\bs{\zeta}^*(\s))$ in this paper.
We shall explain this notion by a concrete example.
Let $(G,\rho,V)$ be a prehomogeneous vector space in Sato~\cite[\S\S7.1 Example (A)]{FSatoI}.
We use all notation in that paper without comments, and assume $v(L^{(1)*})=1$ for simplicity.
For dual zeta functions, we choose $E$ as a $\Q$-regular subspace.
Put $\tau(\ul{s}):=(s_1+s_2+s_3-1,\ 1-s_3,\ 1-s_2)$ for $\ul{s}=(s_1,s_2,s_3)\in\C^3$.
Then, we have 
\[
\pmat{\xi_+(L^*_E;\,\tau(\ul{s}))\\ \xi_-(L^*_E;\,\tau(\ul{s}))}
=
\frac{2\Gamma(s_2)\Gamma(s_3)}{(2\pi)^{s_2+s_3}}
\pmat{
\cos\bigl(\frac{\pi}{2}(s_2+s_3)\bigr)
&
\sin\bigl(\frac{\pi}{2}(s_2-s_3)\bigr)\\
\sin\bigl(\frac{\pi}{2}(s_2-s_3)\bigr)
&
\cos\bigl(\frac{\pi}{2}(s_2+s_3)\bigr)}
\pmat{\xi_+(L;\ul{s})\\ \xi_-(L;\,\ul{s})}
\]
by Theorem~3 (iii) of that paper~\cite{FSatoI}.
This coefficient matrix can be obviously diagonalized, and moreover using a formula \eqref{eq:MF} of the gamma function, we obtain a completed functional equation in such a way that
by setting
\[
B(\ul{s})=B^*(\ul{s})=\pi^{-\tfrac{s_2+s_3}{2}}
\pmat{\Gamma\bigl(\frac{s_2}{2}\bigr)\Gamma\bigl(\frac{s_3}{2}\bigr)&0\\
0&\Gamma\bigl(\frac{s_2+1}{2}\bigr)\Gamma\bigl(\frac{s_3+1}{2}\bigr)}
\cdot\pmat{1&1\\1&-1},
\]
we have
\[
\bs{\eta}^*_E\bigl(\tau(\ul{s})\bigr)
=
\pmat{1&0\\0&-1}
\bs{\eta}_E(\ul{s}),\]
where
\[\bs{\eta}^*_E(\ul{s})=B^*(\ul{s})\pmat{\xi_+(L^*_E;\,\ul{s})\\ \xi_-(L^*_E;\,\ul{s})},\quad
\bs{\eta}_E(\ul{s})
=
B(\ul{s})\pmat{\xi_+(L;\ul{s})\\ \xi_-(L;\,\ul{s})}.
\]
This observation seems to be new.
Other dual zeta functions in \S\S7.1 of \cite{FSatoI} can be also completed, but for another example \cite[\S\S7.2 Example (B)]{FSatoI}, not all functional equations seem to be completed.
In general,
it is not known whether prehomogeneous zeta functions can be completed or not, 
except for some particular reductive cases
(cf.\ \cite{SatakeFaraut}, \cite{DatsovskyWright} and \cite{Thor}).

In this paper,
we consider this problem for prehomogeneous vector spaces
associated with homogeneous open convex cones containing no entire line
(homogeneous cones for short in what follows)
studied in the previous paper~\cite{N2018},
and we show that,
for a certain class of homogeneous cones,
the associated zeta functions have completions.
Since functional equations of prehomogeneous zeta functions essentially come from those of local zeta functions,
we actually work with local zeta functions.
It is worthy to mention that our prehomogeneous vector spaces
are not reductive but solvable.
We are therefore required additional observations which are not needed in reductive cases;
for example,
we will make a delicate discussion about orders of zeta distributions (see~\eqref{def:zd} for definition) when we consider functional equations with respect to them (see Section~\ref{sect:zeta distribution}).

We now introduce the terminologies and the notations which we need to state the precise argument.
Let $\Omega$ be a homogeneous cone of rank $r$ in an $n$-dimensional real vector space $V$.
By Vinberg~\cite{Vinberg},
there exists a split solvable Lie group $H$ acting on $\Omega$ linearly and simply transitively.
According to Ishi~\cite{Ishi2006}, 
we realize $\Omega$ as a subset of the open convex cone $\mathcal{S}^+_N$ of positive-definite symmetric matrices of size $N\in\mathbb{N}$,
that is,
taking $r$ suitable integers $n_1,\dots,n_r$
and 
a suitable system    
$\mathcal{V}_{kj}\subset \mathrm{Mat}(n_k,n_j;\ \R)$
$(1\le j<k\le r)$
of vector spaces,
we regard $\Omega$ as a homogeneous cone included in a subspace of symmetric matrices as follows:
\begin{equation}
\label{eqV}
\Omega\cong
\set{
x=\pmat{x_1I_{n_1}&\transpose{X}_{21}&\cdots&\transpose{X}_{r1}\\
X_{21}&x_2I_{n_2}&\ddots&\transpose{X}_{r2}\\
\vdots&\ddots&\ddots&\vdots\\
X_{r1}&X_{r2}&\cdots&x_rI_{n_r}}}{
\begin{array}{l}
x_j\in\R\\\quad(j=1,\dots,r)\\
X_{kj}\in\mathcal{V}_{kj}\\
\quad(1\le j<k\le r)
\end{array}}\cap\mathcal{S}^+_N,
\end{equation}
where $N:=n_1+\cdots+n_r$.
The following integers and vectors are used frequently in this paper without any comments:
\[
\begin{array}{c}
\displaystyle
n_{kj}:=\dim \mathcal{V}_{kj},\quad
p_k:=\sum_{j<k}n_{kj},\quad
q_j:=\sum_{k>j}n_{kj},\\[1em]
\displaystyle
\ul{1}=(1,\dots,1),\quad
\ul{p}=(p_1,\dots,p_r),\quad
\ul{q}=(q_1,\dots,q_r),\quad
\ul{d}=\ul{1}+\frac{1}{2}(\ul{p}+\ul{q}).
\end{array}
\]

Put $\Ir:=\{\pm1\}^r$.
For each $\ue=(\varepsilon_1,\dots,\varepsilon_r)\in\Ir$,
we denote by $\Oe$ an orbit of $H$ through $\mathrm{diag}(\varepsilon_1I_{n_1},\dots,\varepsilon_rI_{n_r})$.
Note that $\Omega=\Oe[(1,\dots,1)]$.
Then, $\bigsqcup_{\ue\in\Ir}\Oe$ is a Zariski open set in $V$ (cf.\ Gindikin~\cite[p.\ 77]{Gindikin64}) so that
$V$ is a real prehomogeneous vector space.
Let $\Delta_1(x),\dots,\Delta_r(x)$ be the basic relative invariants of $\Omega$.
We denote by $\rapid$ the Schwartz space of rapidly decreasing functions on $V$.
For $f\in\rapid$, we put
\begin{equation}
\label{eq:lzf}
\lzf{f}{\ul{s}}:=\int_{\Oe}|\Delta_1(x)|^{s_1}\cdots|\Delta_r(x)|^{s_r}f(x)\,d\mu(x)\quad(\ue\in\Ir)
\end{equation}
which are called the local zeta functions
associated with $\Oe$.
Here, $d\mu(x)$ is a suitable invariant measure on $\Oe$.
It is known that $\lzf{f}{\ul{s}}$
are absolutely convergent for $\mathrm{Re}\,\ul{s}>\ul{d}\sigma^{-1}$,
and analytically continued to meromorphic functions of $\ul{s}$ in the whole space $\C^r$ (cf.\ \cite{BG69}).
Here, 
$\sigma=(\sigma_{jk})_{1\le j,k\le r}$ is a unimodular matrix containing information about the basic relative invariants as
\begin{equation}
\label{eq:multiplier matrix}
\Delta_j\bigl(\mathrm{diag}(x_1I_{n_1},\dots,x_rI_{n_r})\bigr)
=
x_{1}^{\sigma_{j1}}\cdots x_r^{\sigma_{jr}}\quad(x_1,\dots,x_r\in\R^\times),
\end{equation}
which is called the multiplier matrix of $\Omega$
(cf.\ \cite{N2014}).
Note that we write $\ul{\alpha}>\ul{\beta}$ for $\ul{\alpha},\,\ul{\beta}\in\R^r$ if $\alpha_j>\beta_j$ for all $j=1,\dots,r$.
Associated with the dual prehomogeneous vector space of $V$,
we also have local zeta functions $\dlzf{f}{\ul{s}}$ for $\ud\in\Ir$ and $f\in\rapid$, which are also analytically continued to meromorphic functions of $\ul{s}\in\C^r$.
We shall write these local zeta functions in a vector form as
\[\vlzf{f}{\ul{s}}=\bigl(\lzf{f}{\ul{s}}\bigr)_{\ue\in\Ir}\quad
\text{and}\quad
\vdlzf{f}{\ul{s}}=\bigl(\dlzf{f}{\ul{s}}\bigr)_{\ud\in\Ir},\]
where we equip $\Ir$ with a total order, and fix it.

Let us take and fix a suitable inner product $\innV{\cdot}{\cdot}$ in $V$.
The dual vector space $V^*$ of $V$ is identified with $V$ through this inner product.
The Fourier transform $\Fourier$ of $f\in\rapid$ is defined as
\[
\Fourier(x):=\int_Vf(y)\exp(2\pi\sqrt{-1}\innV{x}{y})\,dy,
\]
where $dy$ is the Euclidean measure on $V$.
Let $\tau$ be the affine transformation on $\C^r$ defined by
$\tau(\s):=(\ul{d}-\s\sigma)\sigma_*^{-1}$,
where $\sigma_*$ is the multiplier matrix of the dual cone $\Omega^*$ of $\Omega$.
For $\ul{\alpha}\in\C^r$,
we write 
$\Gamma(\ul{\alpha}):=\Gamma(\alpha_1)\cdots\Gamma(\alpha_r)$.
Then, the Gindikin gamma function $\Gamma_\Omega(\ul{\alpha})$
of $\Omega$ is defined as
\begin{equation}
\label{eq:defofgamma}
\Gamma_{\Omega}(\ul{\alpha})
=
(2\pi)^{(n-r)/2}\Gamma\Bigl(\ul{\alpha}-\frac{1}{2}\ul{p}\Bigr)
\end{equation}
(cf.\ Gindikin~\cite{Gindikin64}).
Moreover,
we set
\[
A(\ul{\alpha})=\biggl(
\exp\Bigl\{\frac{\pi\sqrt{-1}}{2}\Bigl(
\sum_{j=1}^r\varepsilon_j\delta_j\alpha_j
+\frac{1}{2}\sum_{1\le j<k\le r}\varepsilon_j\delta_kn_{kj}
\Bigr)\Bigr\}
\biggr)_{\ud,\ue\in\Ir},
\]
where the index $\ue$ runs horizontally and $\ud$ vertically.
Then,
Proposition 4.3 of the previous paper \cite{N2018} gives
the following functional equation of local zeta functions:
\begin{equation}
\label{eq:FE}
\vlzf{\Fourier}{\ul{s}}
=
\frac{\Gamma_\Omega(\ul{s}\sigma)}{(2\pi)^{|\ul{s}\sigma|}}
A\Bigl(\ul{s}\sigma-\frac{1}{2}\ul{p}\Bigr)\vdlzf{f}{\tau(\ul{s})}.
\end{equation}
Here,
we set $|\ul{\alpha}|:=\alpha_1+\cdots+\alpha_r$ for $\ul{\alpha}\in\C^r$.

For a technical reason, we assume the following condition:
\begin{equation}
\label{assumption}
\text{For fixed $m=0, 1$,
one has $\frac{\pi}{4}\sum_{j<k}\varepsilon_j\delta_kn_{kj}\equiv m\pi$ $({\rm mod}\ 2\pi)$ for all $\ue,\ud\in\Ir$.}
\end{equation}
Set $\Digit:=\{0,1\}^r$.
We equip $\Digit$ with a total order, and fix it.
According to this order, we define a diagonal matrix 
$\Lambda(\ul{\alpha})$ 
for $\ul{\alpha}\in\C^r$ by
\begin{equation}
\label{eq:defofD}
\Lambda(\ul{\alpha}):=\mathrm{diag}\left(\frac{\pi^{|\ul{\alpha}|/2}}{\Gamma\bigl(2^{-1}(\ul{\alpha}+\ul{a})\bigr)}\right)_{\ul{a}\in\Digit}.
\end{equation}
Then, the main theorem is stated as follows.

\begin{theorem}
\label{theoremB}
Assume the condition~\eqref{assumption}.
Then, there exists an orthogonal matrix $J$ such that,
by setting
\[
\clzf{f}{\s}:=\Lambda\bigl(\s\sigma-2^{-1}\ul{p}\bigr){}^{\,t\!}J\vlzf{f}{\s},\quad
\cdlzf{f}{\s}:=
\Lambda\bigl(\s\sigma_*-2^{-1}\ul{q}\bigr){}^{\,t\!}J\vdlzf{f}{\s},
\]
one has
\[
\clzf{\Fourier}{\s}=\efactor\,\cdlzf{f}{\tau(\s)}\quad(\s\in\C^r,\ f\in\rapid),
\]
where
\begin{equation}
\label{def:efactor}
\efactor=(-1)^m\mathrm{diag}\Bigl(\sign\Bigr)_{\ul{a}\in\Digit}.
\end{equation}
\end{theorem}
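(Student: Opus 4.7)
The plan is to establish the matrix identity
\[
\Lambda(\ul{\alpha}){}^tJ\frac{\Gamma(\ul{\alpha})}{(2\pi)^{|\ul{\alpha}|}}A(\ul{\alpha})=\efactor\,\Lambda(\ul{1}-\ul{\alpha}){}^tJ,\qquad \ul{\alpha}:=\s\sigma-\tfrac{1}{2}\ul{p},
\]
and then to derive the theorem by left-multiplying~\eqref{eq:FE} by $\Lambda(\s\sigma-2^{-1}\ul{p}){}^tJ$. Passing from this identity to the statement of the theorem requires only the bookkeeping facts $\tau(\s)\sigma_*-\ul{q}/2=\ul{1}-\ul{\alpha}$ (immediate from $\tau(\s)\sigma_*=\ul{d}-\s\sigma$ together with $\ul{d}=\ul{1}+(\ul{p}+\ul{q})/2$) and $\Gamma_{\Omega}(\s\sigma)/(2\pi)^{|\s\sigma|}=\Gamma(\ul{\alpha})/(2\pi)^{|\ul{\alpha}|}$ (from~\eqref{eq:defofgamma} and $n-r=|\ul{p}|$).

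To construct $J$, I would exploit the Kronecker structure that~\eqref{assumption} forces on $A$. Under that assumption the double sum in the exponent of each entry of $A(\ul{\alpha})$ contributes only the global sign $(-1)^m$, so
\[
A(\ul{\alpha})=(-1)^m\bigotimes_{j=1}^{r}A_j(\alpha_j),\qquad A_j(\alpha):=\pmat{e^{\pi\sqrt{-1}\alpha/2}&e^{-\pi\sqrt{-1}\alpha/2}\\ e^{-\pi\sqrt{-1}\alpha/2}&e^{\pi\sqrt{-1}\alpha/2}}.
\]
Each $A_j(\alpha)$ is simultaneously diagonalised by the symmetric orthogonal matrix $J_0:=\tfrac{1}{\sqrt{2}}\smat{1&1\\1&-1}$, giving ${}^tJ_0A_j(\alpha)J_0=\mathrm{diag}(2\cos(\pi\alpha/2),\,2\sqrt{-1}\sin(\pi\alpha/2))$. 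I would then take $J:=J_0^{\otimes r}$, with the tensor indexing on rows and columns chosen to agree with the fixed total orders on $\Digit$ and $\Ir$ respectively; orthogonality of $J$ is inherited from that of $J_0$, and one obtains ${}^tJ\,A(\ul{\alpha})=D(\ul{\alpha})\,{}^tJ$ with $D(\ul{\alpha}):=(-1)^m\bigotimes_j\mathrm{diag}(2\cos(\pi\alpha_j/2),\,2\sqrt{-1}\sin(\pi\alpha_j/2))$.

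The remaining computation is a single-variable gamma calculation. Combining the Legendre duplication formula with the reflection formula $\Gamma(s)\Gamma(1-s)=\pi/\sin(\pi s)$ yields
\[
\frac{\Gamma(\alpha)}{(2\pi)^\alpha}\cdot 2\cos\Bigl(\frac{\pi\alpha}{2}\Bigr)=\pi^{1/2-\alpha}\frac{\Gamma(\alpha/2)}{\Gamma((1-\alpha)/2)},\quad \frac{\Gamma(\alpha)}{(2\pi)^\alpha}\cdot 2\sqrt{-1}\sin\Bigl(\frac{\pi\alpha}{2}\Bigr)=\sqrt{-1}\,\pi^{1/2-\alpha}\frac{\Gamma((\alpha+1)/2)}{\Gamma((2-\alpha)/2)},
\]
which are precisely the $a=0$ and $a=1$ entries of $\mathrm{diag}(1,\sqrt{-1})\,\Lambda(1-\alpha)\Lambda(\alpha)^{-1}$. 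Tensoring these $r$ identities and using $\mathrm{diag}(1,\sqrt{-1})^{\otimes r}=\mathrm{diag}((\sqrt{-1})^{|\ul{a}|})_{\ul{a}\in\Digit}$ together with $|\ul{1}-\ul{\alpha}|=r-|\ul{\alpha}|$, one gets $\Lambda(\ul{\alpha})\,\Gamma(\ul{\alpha})(2\pi)^{-|\ul{\alpha}|}D(\ul{\alpha})=\efactor\,\Lambda(\ul{1}-\ul{\alpha})$, which multiplied by ${}^tJ$ on the right produces the desired matrix identity. The main obstacle I foresee is not any isolated hard step but the careful tracking of conventions: verifying that the chosen tensor ordering on $J_0^{\otimes r}$ is compatible with the prescribed total orders on $\Ir$ and $\Digit$ so that $\mathrm{diag}(1,\sqrt{-1})^{\otimes r}$ literally equals $\mathrm{diag}((\sqrt{-1})^{|\ul{a}|})_{\ul{a}\in\Digit}$, and that the shifts $\ul{p}/2,\,\ul{q}/2$, and $\ul{d}$ align correctly across the substitutions involving $\sigma$ and $\sigma_*$.
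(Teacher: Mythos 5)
Your proof is correct and produces exactly the paper's $J$ and the same final matrix identity, but you organize the diagonalization step differently: the paper proves its Lemma~\ref{lemma:diagonalization} by exhibiting each $\kv$ as an eigenvector of $A(\ul{\alpha})$ with eigenvalue $(-1)^m 2^r\sign\pcos{\ul{\alpha}}$ (via the identity $\sum_{\ue}\kvsub{\ue}\exp(\tfrac{\pi\sqrt{-1}}{2}\sum_j\varepsilon_j\beta_j)=2^r(\sqrt{-1})^{|\ul{a}|}\pcos{\ul{\beta}}$ together with $\Pcos(\delta_1\alpha_1,\dots,\delta_r\alpha_r)=\kvsub{\ud}\Pcos{(\ul{\alpha})}$), whereas you first observe that under~\eqref{assumption} the matrix $A(\ul{\alpha})$ factors as $(-1)^m\bigotimes_j A_j(\alpha_j)$ and then diagonalize each $2\times 2$ block by $J_0$. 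The two are mathematically equivalent — your $J_0^{\otimes r}$ has $(\ue,\ul{a})$-entry $2^{-r/2}\prod_j\varepsilon_j^{a_j}=2^{-r/2}\kvsub{\ue}$, so it is literally the paper's $J$ (modulo the ordering caveat you flag) — but the Kronecker packaging makes the rank-one reduction explicit and automatically yields the diagonal entries as $\bigotimes_j\mathrm{diag}(2\cos(\pi\alpha_j/2),2\sqrt{-1}\sin(\pi\alpha_j/2))$, removing the need to state and verify the paper's auxiliary cosine identities. The gamma computation you use (reflection plus Legendre duplication, done coordinatewise and tensored) is exactly the content of the paper's Lemma~\ref{lemma:halfGamma} (formula~\eqref{eq:MF}), and the final bookkeeping with $\ul{d}=\ul{1}+(\ul{p}+\ul{q})/2$, $\tau(\s)\sigma_*=\ul{d}-\s\sigma$, and $|\ul{p}|=n-r$ matches the paper's. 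One small point worth making explicit if you write this up: that $\efactor$ and the diagonal $\Lambda$-factors commute (being all diagonal), so the reordering from $\Lambda(\ul{\alpha})^{-1}\efactor\Lambda(\ul{1}-\ul{\alpha})$ into the form you need after multiplying the functional equation~\eqref{eq:FE} by $\Lambda(\ul{\alpha}){}^{t\!}J$ on the left is legitimate.
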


The transformation matrices $B(\ul{s})$ and $B^*(\ul{s})$ in this theorem are not same but similar in a sense that
the difference, that is the arguments of $\Lambda$ relate to the domains in which the corresponding gamma functions $\Gamma_{\Omega}$ and $\Gamma_{\Omega^*}$ of $\Omega$ and $\Omega^*$, respectively, converge absolutely.
Namely, we see by \cite[p.\ 22]{Gindikin64} that
the integral
\[
\int_{\Omega}\Delta_1(x)^{s_1}\cdots\Delta_r(x)^{s_r}e^{-\innV{x}{I_N}}d\mu(x)
=
\Gamma_{\Omega}(\ul{s}\sigma)\quad(I_N:=\mathrm{diag}(I_{n_1},\dots,I_{n_r}))
\]
converges absolutely when $\mathrm{Re}\,\ul{s}\sigma-\frac{1}{2}\ul{p}>0$, 
whereas the integral
\[
\int_{\Omega^*}\Delta^*_1(y)^{s_1}\cdots\Delta^*_r(y)^{s_r}e^{-\innV{y}{I_N}}d\mu^*(y)=\Gamma_{\Omega^*}(\ul{s}\sigma_*)
\]
converges absolutely when $\mathrm{Re}\,\ul{s}\sigma_*-\frac{1}{2}\ul{q}>0$. 
Here, $\Delta^*_1(y),\dots,\Delta^*_r(y)$ are the basic relative invariants of $\Omega^*$ and $d\mu^*(y)$ a suitable invariant measure on $\Omega^*$. 

We shall prove Theorem~\ref{theoremB} in Section~\ref{sect:proof}.
Section~\ref{sect:zeta distribution} is devoted to investigating a relationship between ${}^{t\!}J\vlzf{f}{\s}$ and zeta distributions (see \eqref{def:zd} for definition).

\section{Proof of Theorem~\ref{theoremB}}
\label{sect:proof}

Let us start proving Theorem~\ref{theoremB}.
We use all notations in Introduction.
Set $\w=\ul{s}\sigma-\frac{1}{2}\ul{p}$.
Then, the gamma matrix, that is
the coefficient matrix in \eqref{eq:FE} can be written, by~\eqref{eq:defofgamma}, as
\[
\frac{\Gamma_\Omega(\ul{s}\sigma)}{(2\pi)^{|\ul{s}\sigma|}}
A\Bigl(\ul{s}\sigma-\frac{1}{2}\ul{p}\Bigr)
=
\frac{\Gamma(\ul{s}\sigma-\frac{1}{2}\ul{p})}{(2\pi)^{|\ul{s}\sigma-\frac{1}{2}\ul{p}|}}A\Bigl(\ul{s}\sigma-\frac{1}{2}\ul{p}\Bigr)
=
\frac{\Gamma(\w)}{(2\pi)^{|\w|}}A(\w).
\]
In the first equality,
we use $|\ul{p}|=n-r$.
Throughout this paper,
we always assume the condition~\eqref{assumption}
so that
$A(\ul{\alpha})$ reduces to
\begin{equation}
\label{def:gamma matrix}
A(\ul{\alpha})=
(-1)^m\biggl(\exp\Bigl(\frac{\pi\sqrt{-1}}{2}\sum_{j=1}^r\varepsilon_j\delta_j\alpha_j\Bigr)\biggr)_{\ud,\ue\in\Ir}.
\end{equation}

For $\ul{a}\in\Digit$,
let $\kv\in\Ir[2^r]$ be a column vector defined by
\[
\kv:=\bigl(\kvsub{\ue}\bigr)_{\ue\in\Ir}\in\Ir[2^r],\quad\text{where}\quad
\kvsub{\ue}:=\prod_{j=1}^r\varepsilon_j^{a_j}\quad(\ue\in\Ir).
\]
Notice $\set{\kv}{\ul{a}\in\Digit}=\mathcal{I}_2^{\otimes r}$ so that
it forms an orthogonal basis of $\R^{2^r}$ with respect to the standard inner product in $\R^{2^r}$.
Let $J$ be an orthogonal matrix of size $2^r$ obtained by arraying column vectors $2^{-r/2}\kv$ in a row, that is,
\[
J:=2^{-r/2}\bigl(\kv\bigr)_{\ul{a}\in\Digit}.
\]
For $\ul{a}\in\Digit$, we set
\[\pcos{\ul{\alpha}}:=\prod_{j=1}^r\cos\Bigl(\frac{\pi}{2}(\alpha_j-a_j)\Bigr)\quad(\ul{\alpha}\in\C^r).\]

\begin{lemma}
\label{lemma:diagonalization}
The matrix $A(\ul{\alpha})$ in~\eqref{def:gamma matrix} can be diagonalized by $J$ independent of $\ul{\alpha}$ as follows:
\[
{}^{t\!}JA(\ul{\alpha})J=(-1)^m\cdot 2^r\mathrm{diag}\Bigl(\sign\Pcos(\ul{\alpha})\Bigr)_{\ul{a}\in\Digit}.
\]
\end{lemma}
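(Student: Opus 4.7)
The plan is to observe that both $A(\ul{\alpha})$ and $J$ factorise as Kronecker products of $r$ matrices of size $2$, reducing the claim to a single rank-one computation. Ordering each coordinate of $\Ir$ as $(+1,-1)$ and each coordinate of $\Digit$ as $(0,1)$, the entry formula~\eqref{def:gamma matrix} yields $A(\ul{\alpha})=(-1)^m\bigotimes_{j=1}^r A_j(\alpha_j)$, where $A_j(\alpha_j)$ is the $2\times 2$ matrix with $(\delta_j,\varepsilon_j)$-entry $\exp\bigl(\tfrac{\pi\sqrt{-1}}{2}\varepsilon_j\delta_j\alpha_j\bigr)$. Similarly, $\kvsub{\ue}=\prod_j\varepsilon_j^{a_j}$ gives $J=\bigotimes_{j=1}^r J_j$ with $J_j=\tfrac{1}{\sqrt{2}}\bigl(\begin{smallmatrix}1&1\\1&-1\end{smallmatrix}\bigr)$; in particular $J$ is orthogonal and manifestly independent of $\ul{\alpha}$.

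By the mixed-product rule for Kronecker products, the conjugation ${}^{t\!}JA(\ul{\alpha})J$ decomposes factor-by-factor, so it suffices to verify
\[
{}^{t\!}J_j\,A_j(\alpha_j)\,J_j \;=\; 2\,\mathrm{diag}\bigl(\cos(\pi\alpha_j/2),\ \sqrt{-1}\sin(\pi\alpha_j/2)\bigr)
\]
for each $j=1,\dots,r$. I would prove this by writing $A_j(\alpha_j)=\cos(\pi\alpha_j/2)(I+X)+\sqrt{-1}\sin(\pi\alpha_j/2)(I-X)$ with $X=\bigl(\begin{smallmatrix}0&1\\1&0\end{smallmatrix}\bigr)$: since $J_j$ is symmetric with $J_j^2=I$ and diagonalises $X$ to $\mathrm{diag}(1,-1)$, the matrices $J_j(I\pm X)J_j$ are the diagonal projections $\mathrm{diag}(2,0)$ and $\mathrm{diag}(0,2)$, yielding the claim at once. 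The identity $\sin(\pi\alpha_j/2)=\cos(\pi(\alpha_j-1)/2)$ then rewrites the two diagonal entries uniformly as $\sqrt{-1}^{\,a_j}\cos(\pi(\alpha_j-a_j)/2)$ with $a_j\in\{0,1\}$.

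Tensoring these $r$ rank-one identities produces
\[
{}^{t\!}JA(\ul{\alpha})J=(-1)^m\cdot 2^r\,\mathrm{diag}\Bigl(\prod_{j=1}^r\sqrt{-1}^{\,a_j}\cos(\pi(\alpha_j-a_j)/2)\Bigr)_{\ul{a}\in\Digit},
\]
which is exactly the stated right-hand side since $\prod_j\sqrt{-1}^{\,a_j}=(\sqrt{-1})^{|\ul{a}|}=\sign$ and $\prod_j\cos(\pi(\alpha_j-a_j)/2)=\Pcos(\ul{\alpha})$. The argument is essentially one $2\times 2$ computation plus bookkeeping; the only real subtlety is ensuring that the coordinate orderings used on $\ue$, $\ud$, and $\ul{a}$ are consistent across the two Kronecker factorisations, so that the diagonal indexing inherited from the tensor product matches the fixed total order on $\Digit$ used to define $\Lambda$ and $J$ in the Introduction. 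No analytic difficulties arise, and independence of $J$ from $\ul{\alpha}$ is automatic since each $J_j$ is $\alpha_j$-free.
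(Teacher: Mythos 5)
Your argument is correct and complete; the only item to keep an eye on (which you flag yourself) is that the Kronecker-product identification of $J$ with $\bigotimes_j J_j$ and of $A(\ul{\alpha})$ with $(-1)^m\bigotimes_j A_j(\alpha_j)$ requires the orderings on $\Ir$ and $\Digit$ to be the lexicographic ones induced by the per-coordinate orderings $(+1,-1)$ and $(0,1)$; since the paper only fixes \emph{some} total orders, this is a choice you are entitled to make, and any other consistent choice just conjugates both sides by the same permutation matrix.

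Your route is recognizably different from the paper's. The paper proves the lemma by directly verifying that each column $\kv$ of $J$ is an eigenvector of $A(\ul{\alpha})$: it computes $\sum_{\ue\in\Ir}\kvsub{\ue}\exp\bigl(\tfrac{\pi\sqrt{-1}}{2}\sum_j\varepsilon_j\beta_j\bigr)=2^r(\sqrt{-1})^{|\ul{a}|}\Pcos(\ul{\beta})$, specializes $\ul{\beta}=(\delta_1\alpha_1,\dots,\delta_r\alpha_r)$, and uses the reduction $\cos\bigl(\tfrac{\pi}{2}(\delta z-a)\bigr)=\delta^a\cos\bigl(\tfrac{\pi}{2}(z-a)\bigr)$ to peel off $\kvsub{\ud}$ and read off the eigenvalue $(-1)^m2^r\sign\Pcos(\ul{\alpha})$. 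You instead factor both $A(\ul{\alpha})$ and $J$ as $r$-fold Kronecker products of $2\times2$ blocks and conjugate block by block, reducing everything to one rank-two computation $J_jA_j(\alpha_j)J_j=2\,\mathrm{diag}\bigl(\cos(\pi\alpha_j/2),\sqrt{-1}\sin(\pi\alpha_j/2)\bigr)$. The two proofs exploit the same multiplicative structure, but yours makes it explicit at the level of matrices rather than of sums, which makes the $\ul{\alpha}$-independence of $J$ and the form of the eigenvalues structurally transparent, at the cost of committing to a specific (lexicographic) indexing. The paper's eigenvector computation is indexing-agnostic but hides the tensor structure inside the character-sum identity, which is essentially your rank-two calculation summed out.
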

\begin{proof}
We shall prove this lemma by showing that,
for any $\ul{a}\in\Digit$, 
the vector $\kv$ is an eigenvector of $A(\ul{\alpha})$ and its corresponding eigenvalue is $(-1)^m\cdot 2^r\sign \pcos{\ul{\alpha}}$.

For any vectors $\ul{a}\in\Digit$ and $\ul{\beta}=(\beta_1,\dots,\beta_r)\in\C^r$,
it is easily verified that
\[
\sum_{\ue\in\Ir}\kvsub{\ue}\exp\Bigl(\frac{\pi\sqrt{-1}}{2}\sum_{j=1}^r\varepsilon_j\beta_j\Bigr)
=
2^r(\sqrt{-1})^{|\ul{a}|}\prod_{j=1}^r\Pcos(\ul{\beta}).
\]
In our case,
the vector $\ul{\beta}$ is one of the vectors $(\delta_1\alpha_1,\dots,\delta_r\alpha_r)$ for some $\ud\in\Ir$.
An elementary calculation yields that
\[
\cos\Bigl(\frac{\pi}{2}(\delta z-a)\Bigr)
=
\delta^a
\cos\Bigl(\frac{\pi}{2}(z-a)\Bigr)\quad\text{for $\delta\in\{1,-1\}$ and $a\in\{0,1\}$},
\]
so that we obtain
$\Pcos(\ul{\beta})=\kvsub{\ud}\Pcos(\ul{\alpha})$.
This implies that,
for any $\ul{a}\in\Digit$, 
the vector $\kv$ is an eigenvector of $A(\ul{\alpha})$,
and $(-1)^m\cdot 2^r\sign \pcos{\ul{\alpha}}$ is its corresponding eigenvalue,
and hence the lemma is proved.
\end{proof}

Recall that $\w=\s\sigma-\frac{1}{2}\ul{p}$.

\begin{lemma}
\label{lemma:halfGamma}
For each $\ul{a}\in\Digit$, one has
\[
\frac{\Gamma(\w)}{(2\pi)^{|\w|}}\pcos{\w}=
\frac{\pi^{r/2-|\w|}}{2^r}\cdot\frac{\Gamma\bigl(2^{-1}(\ul{s}\sigma-2^{-1}\ul{p}+\ul{a})\bigr)}{\Gamma\bigl(2^{-1}(\tau(\ul{s})\sigma_*-2^{-1}\ul{q}+\ul{a})\bigr)}.
\]
\end{lemma}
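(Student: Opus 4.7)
The plan is to reduce the $r$-variable identity to a scalar identity by noting that both sides factor over $j = 1, \dots, r$. On the left, $\Gamma(\w) = \prod_j \Gamma(w_j)$, the exponential $(2\pi)^{|\w|}$ splits as $\prod_j (2\pi)^{w_j}$, and $\pcos{\w}$ is by definition a product of cosines. On the right, the gamma of a tuple is again a product, and $\pi^{r/2 - |\w|} = \prod_j \pi^{1/2 - w_j}$. Before splitting, I would first simplify the argument of the denominator gamma on the right: since $\tau(\s) = (\ul{d} - \s\sigma)\sigma_*^{-1}$ and $\ul{d} = \ul{1} + \tfrac{1}{2}(\ul{p} + \ul{q})$, a direct computation gives
\[
\tau(\s)\sigma_* - \tfrac{1}{2}\ul{q} = \ul{1} + \tfrac{1}{2}\ul{p} - \s\sigma = \ul{1} - \w.
\]

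With this substitution, the claim reduces, factor by factor, to the scalar identity
\[
\frac{\Gamma(w)}{(2\pi)^{w}}\cos\Bigl(\frac{\pi}{2}(w - a)\Bigr)
=
\frac{\pi^{1/2 - w}}{2}\cdot\frac{\Gamma\bigl((w + a)/2\bigr)}{\Gamma\bigl((1 - w + a)/2\bigr)}
\qquad (w \in \C,\ a \in \{0,1\}),
\]
which is essentially the formula labelled \eqref{eq:MF} in the introduction. I would derive it from Legendre's duplication formula $\Gamma(w) = (2\pi)^{-1/2}\,2^{w - 1/2}\,\Gamma(w/2)\Gamma((w+1)/2)$ together with Euler's reflection identity $\Gamma(z)\Gamma(1-z) = \pi / \sin(\pi z)$. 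For $a = 0$ one applies reflection to rewrite $\Gamma((1-w)/2) = \pi / \bigl(\cos(\pi w/2)\,\Gamma((w+1)/2)\bigr)$ and substitutes into duplication; the powers of $2$ and $\pi$ then assemble into the required form. For $a = 1$, where $\cos(\pi(w-1)/2) = \sin(\pi w/2)$, one instead uses $\Gamma(1 - w/2) = \pi / \bigl(\sin(\pi w/2)\,\Gamma(w/2)\bigr)$ in the same way.

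I do not anticipate any genuine difficulty. Both the affine bookkeeping that yields $\tau(\s)\sigma_* - \tfrac{1}{2}\ul{q} = \ul{1} - \w$ and the gamma-function manipulation are mechanical once the formulas are in place. The only mild care is to treat the two parities $a = 0, 1$ in parallel, which is handled cleanly by splitting $\cos(\pi(w-a)/2)$ into $\cos(\pi w/2)$ or $\sin(\pi w/2)$ and invoking the appropriate reflection identity in each case. After the scalar identity is established, taking the product over $j = 1, \dots, r$ recovers the statement of the lemma.
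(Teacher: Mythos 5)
Your proof is correct and follows essentially the same route as the paper: reduce the argument of the denominator gamma via $\tau(\s)\sigma_* - \tfrac{1}{2}\ul{q} = \ul{1}-\w$, combine Legendre duplication with Euler reflection to get the scalar identity \eqref{eq:MF}, and multiply over $j=1,\dots,r$. The only cosmetic difference is that you spell out the $a=0,1$ case split in deriving \eqref{eq:MF}, which the paper leaves implicit.
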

\begin{proof}
Let us recall two famous formulas of the gamma function,
that is, Euler's reflection formula and
Legendre's duplication formula:
\[
\Gamma(z)\Gamma(1-z)=\frac{\pi}{\sin\pi z},\quad
\Gamma(z)=\frac{2^z}{2\sqrt{\pi}\ }\Gamma\Bigl(\frac{z}{2}\Bigr)
\Gamma\Bigl(\frac{z+1}{2}\Bigr)\quad(z\in \C).
\]
Combining these two formulas, we obtain the following formula
\begin{equation}
\label{eq:MF}
\Gamma(z)\,\cos\Bigl(\frac{\pi}{2}(z-a)\Bigr)=
\frac{2^z\sqrt{\pi}}{2}\cdot
\frac{\Gamma\bigl(2^{-1}(z+a)\bigr)}{\Gamma\bigl(2^{-1}(1-z+a)\bigr)}
\quad(z\in\C;\ a=0\text{ or }1).
\end{equation}
This equation implies that
\[
\prod_{i=1}^r
\Gamma(w_i)\cos\left(\frac{\pi}{2}(w_i-a_i)\right)
=
\prod_{i=1}^r
\frac{2^{w_i}\sqrt{\pi}}{2}\cdot
\frac{\Gamma\bigl(2^{-1}(w_i+a_i)\bigr)}{\Gamma\bigl(2^{-1}(1-w_i+a_i)\bigr)},
\]
that is,
\[
\Gamma(\w)\pcos{\w}
=
\frac{2^{|\w|}\pi^{r/2}}{2^r}\cdot
\frac{\Gamma\bigl(2^{-1}(\w+\ul{a})\bigr)}{\Gamma\bigl(2^{-1}(\ul{1}-\w+\ul{a})\bigr)}
\quad(\ul{a}\in\mathcal{A}).
\]
The facts $\ul{d}=\ul{1}+(\ul{p}+\ul{q})/2$ and $\ul{d}-\ul{s}\sigma=\tau(\ul{s})\sigma_*$ show
\[
\ul{1}-\ul{w}
=
\ul{1}-\ul{s}\sigma+\frac{1}{2}\ul{p}
=
\ul{d}-\ul{s}\sigma-\frac{1}{2}\ul{q}
=
\tau(\ul{s})\sigma_*-\frac{1}{2}\ul{q},
\]
whence we arrive at the formula in the lemma.
\end{proof}

We are now in the final step of the proof of the main theorem.
Let us recall definition of diagonal matrices $\Lambda(\ul{\alpha})$
and $\efactor$ in \eqref{eq:defofD} and \eqref{def:efactor}, respectively.
We set $\ul{v}=\ul{1}-\ul{w}=\tau(\ul{s})\sigma_*-\frac{1}{2}\ul{q}$ for brevity.
Notice that $\frac{r}{2}-|\w|=-\frac{1}{2}\bigl|\ul{w}\bigr|
+\frac{1}{2}\bigl|\ul{v}\bigr|$.
Then,
Lemmas~\ref{lemma:diagonalization} and~\ref{lemma:halfGamma} 
yield that
\[
\begin{array}{r@{\ }c@{\ }l}
\displaystyle
\frac{\Gamma(\w)}{(2\pi)^{|\w|}}A(\w)
&=&
\displaystyle
(-1)^m\cdot
2^r
J
\mathrm{diag}\left(
	\sign
	\frac{\Gamma(\w)}{(2\pi)^{|\w|}}
	\Pcos(\w)
\right)_{\ul{a}\in\Digit}
{}^{\,t\!}J\\[1em]
&=&
\displaystyle
(-1)^m\cdot
2^r
J
\mathrm{diag}\left(
	\sign
	\frac{\pi^{r/2-|\w|}}{2^r}\cdot
	\frac{\Gamma\bigl(2^{-1}(\ul{w}+\ul{a})\bigr)}%
	{\Gamma\bigl(2^{-1}(\ul{v}+\ul{a})\bigr)}
\right)_{\ul{a}\in\Digit}
{}^{\,t\!}J\\[1.1em]
&=&
\displaystyle
J
\mathrm{diag}\left(
	(-1)^m\,\sign \,
	\frac{\Gamma(2^{-1}\bigl(\ul{w}+\ul{a})\bigr)}{\pi^{|\ul{w}|/2}}
	\,
	\frac{\pi^{|\ul{v}|/2}}{\Gamma\bigl(2^{-1}(\ul{v}+\ul{a})\bigr)}
\right)_{\ul{a}\in\Digit}
{}^{\,t\!}J\\[1em]
&=&
\displaystyle
J
\Lambda(\ul{w})^{-1}
\efactor
\Lambda(\ul{v})
{}^{\,t\!}J,
\end{array}
\]
whence
we conclude that the functional equation~\eqref{eq:FE} can be completed.
\qed

\begin{remark}
We note that there exist homogeneous cones satisfying~\eqref{assumption}.
In fact, 
the exceptional symmetric cone $\mathrm{Herm}(3,\mathbb{O})^+$ obviously satisfies~\eqref{assumption}.
Other examples are given as homogeneous cones $\Omega$ such that
$n_{kj}=0$ or $4$ for all $j<k$.
Such homogeneous cones can be constructed from chordal and $A_4$-free graphs 
(see Letac--Massam~\cite{LM2007} for definition) as follows.
Let $G$ be a chordal and $A_4$-free graph of size $n$.
Set 
\[V_G:=\set{x\in\mathrm{Herm}(n,\mathbb{H})}{x_{ij}=0\text{ if }i\not\sim j\text{ in }G}.
\]
Then, $\Omega_G:=V_G\cap\mathrm{Herm}(n,\mathbb{H})^+$ is a desired homogeneous cone.
\end{remark}

\begin{remark}
Under the assumption~\eqref{assumption},
we are able to give an explicit formula to the determinant of gamma matrices $A(\ul{\alpha})$.
In fact, let us assume that $r\ge2$ and set $\Digit'=\set{\ue\in\Digit}{\varepsilon_1=1}$.
Notice that $Q_{\ul{a}}(\ul{\alpha})Q_{\ul{1}-\ul{a}}(\ul{\alpha})=Q_{\ul{0}}(\ul{\alpha})Q_{\ul{1}}(\ul{\alpha})$ for any $\ul{a}\in\Digit'$.
By Lemma~\ref{lemma:diagonalization}, we have
\[
\begin{array}{r@{\ }c@{\ }l}
\det A(\ul{\alpha})&=&
\displaystyle
\prod_{\ul{a}\in\mathcal{A}}(-1)^m\cdot 2^r\ \sign\pcos{\ul{\alpha}}
=
\prod_{\ul{a}\in\Digit'}
2^{2r}\,(\sqrt{-1})^{r}Q_{\ul{a}}(\ul{\alpha})Q_{\ul{1}-\ul{a}}(\ul{\alpha})\\
&=&
\displaystyle
\Bigl(2^{2r}(\sqrt{-1})^r\prod_{j=1}^r\cos\frac{\pi \alpha_j}{2}\,\sin\frac{\pi \alpha_j}{2}\Bigr)^{2^{r-1}}
=
\Bigl(\prod_{j=1}^r2\sqrt{-1}\sin\pi\alpha_j\Bigr)^{2^{r-1}}.
\end{array}
\]
On the other hand,
an explicit formula of $\det A(\ul{\alpha})$ is not given for general $\Omega$.
Let us consider for the case of the Vinberg cone $\Omega$.
Namely, $\Omega$ is a homogeneous cone of rank $3$ with
$n_{21}=n_{31}=1$ and $n_{32}=0$ so that the condition~\eqref{assumption} fails.
In this case, 
the determinant of $A(\ul{\alpha})$ is calculated as
\[\det A(\ul{\alpha})=
2^{12}\bigl(\sin\pi\alpha_1\bigr)^4\bigl(\sin\pi\alpha_2\bigr)^4\bigl(\sin\pi\alpha_3\bigr)^4=
\Bigl(\prod_{j=1}^32\sqrt{-1}\sin\pi\alpha_j\Bigr)^4,\]
and therefore, for any homogeneous cone of rank $r\ge 2$, we shall conjecture
\[
\det A(\ul{\alpha})=
\Bigl(\prod_{j=1}^r2\sqrt{-1}\sin\pi \alpha_j\Bigr)^{2^{r-1}}.
\]
\end{remark}

\section{Zeta distributions}
\label{sect:zeta distribution}

In this section,
we investigate a relationship between ${}^{t\!}J\vlzf{f}{\s}$ in the main theorem and zeta distributions defined in~\eqref{def:zd} below.
Let $\mathrm{sgn}$ be the sign function of $\R^\times$, that is,
$\mathrm{sgn}(x):=x/|x|$, and we set $\mathrm{sgn}(0):=0$.
Let $\omega^{s,a}$ $(s\in\C,\ a=0,1)$ be the quasi-character of $\R^{\times}$ defined by
$\omega^{s,a}(x):=\mathrm{sgn}(x)^a\,|x|^s$.
Using this,
we introduce zeta distributions $\localZ{f}$ by
\begin{equation}
\label{def:zd}
\localZ{f}:=\int_V\prod_{j=1}^r\omega^{s_j,b_i}\bigl(\Delta_j(x)\bigr)\,f(x)\,d\mu(x)
\quad
(\ul{b}\in\Digit,\ f\in\rapid,\ \ul{s}\in\C^r).
\end{equation}
We note that, 
for symmetric cones viewed as homogeneous spaces of reductive groups,
zeta distributions are studied in \cite{BCK2018}.
Let us describe $\localZ{f}$ by using $\lzf{f}{\s}$.
Set $\ce:=\mathrm{diag}(\varepsilon_1I_{n_1},\dots,\varepsilon_rI_{n_r})\in V$ (cf.\ \eqref{eqV})
and let $\ul{e}_j\in\R^r$ be the row unit vector having one on the $j$-th position and zeros elsewhere.
Recalling the property~\eqref{eq:multiplier matrix} of the multiplier matrix $\sigma=(\sigma_{jk})_{1\le j,k\le r}$,
we have
\[
\Delta_j(\ce)=\varepsilon_1^{\sigma_{j1}}\cdots\varepsilon_r^{\sigma_{jr}}=\kvsup{\ul{e}_j\sigma}.
\]
Since $\Oe$ is the orbit of $H$ through $\ce\in V$
and since $\Delta_j$ is a relatively invariant function,
we have for $x\in\Oe$
\[
\omega^{s_j,b_j}\bigl(\Delta_j(x)\bigr)
=
\bigl(\varepsilon_1^{\sigma_{j1}}\cdots\varepsilon_r^{\sigma_{jr}}\bigr)^{b_j}
|\Delta_j(x)|^{s_j}
=\kvsup{b_j\,\ul{e}_j\sigma}|\Delta_j(x)|^{s_j},
\]
and thus
\[
\prod_{j=1}^r\omega^{s_j,b_j}\bigl(\Delta_j(x)\bigr)
=
\kvsup{\ul{b}\sigma}\,|\Delta_1(x)|^{s_1}\cdots|\Delta_r(x)|^{s_r}.
\]
By~\eqref{eq:lzf}, we obtain
\begin{equation}
\label{eq:linearcomb}
\localZ{f}=\sum_{\ue\in\Ir}
\kvsup{\ul{b}\sigma}
\lzf{f}{\ul{s}},
\end{equation}
whence each $\localZ{f}$ is one of entries of ${}^{t\!}J\vlzf{f}{\s}$.
This formula also tells us that
analytic properties of $\localZ{f}$ are the same as $\lzf{f}{\s}$.
Since the map
$\Digit\ni\ul{a}\mapsto\ul{a}\sigma\ (\mathrm{mod}\ 2)\in\Digit$
is a bijection because $\sigma$ is a unimodular matrix,
the correspondence of $\localZ{f}$ and entries of ${}^{t\!}J\vlzf{f}{\s}$ is one-to-one.

In order to state the main theorem by using zeta distributions,
we need to introduce another order in $\Digit$.
Let us denote by $<$ the fixed order in $\Digit$.
According to \eqref{eq:linearcomb},
we introduce a new order $\prec_\sigma$ in $\Digit$ depending on $\sigma$ by
\[
\ul{b}\prec_\sigma\ul{b}'\quad\Leftrightarrow\quad
\ul{b}\sigma<\ul{b}'\sigma\quad(\ul{b},\ul{b}'\in\Digit).
\]
Let $\Digit_\sigma$ denote the set $\Digit$ with the order $\prec_\sigma$.
Then,
we have
\[
\bs{Z}(f;\,\ul{s}):=
\bigl(\localZ{f}\bigr)_{\ul{b}\in\Digit_\sigma}
=
{}^{t\!}J\vlzf{f}{\ul{s}}.
\]
On the other hand,
to the dual cone $\Omega^*$,
we can associate zeta distributions $\dlocalZ{f}$ $(\ul{c}\in\Digit)$, 
and similarly to $\localZ{f}$,
they satisfy
\[
\dlocalZ{f}=\sum_{\ud\in\Ir}\kappa_{\ud}(\ul{c}\sigma_*)\dlzf{f}{\ul{s}}
\quad(\ul{c}\in\Digit,\ f\in\rapid,\ \ul{s}\in\C^r).
\]
Therefore, we need to consider $\Digit_{\sigma_*}$ so that
\[
\bs{Z}^*(f;\,\ul{s}):=\bigl(\dlocalZ{f}\bigr)_{\ul{c}\in\Digit_{\sigma_*}}
=
{}^{t\!}J\vdlzf{f}{\ul{s}}.
\]

\begin{corollary}
Assume the condition~\eqref{assumption}.
Putting
\[
\widetilde{\bs{Z}}(f;\,\ul{s}):=
\Lambda\bigl(\s\sigma-2^{-1}\ul{p}\bigr)\bs{Z}(f;\,\ul{s}),\quad
\widetilde{\bs{Z}}{}^*(f;\,\s):=
\Lambda\bigl(\s\sigma_*-2^{-1}\ul{q}\bigr)\bs{Z}^*(f;\,\ul{s})
\]
for $\ul{s}\in\C^r$ and $f\in\rapid$, one has
\[
\widetilde{\bs{Z}}(\Fourier;\,\s)
=
\efactor\,\widetilde{\bs{Z}}{}^*(f;\,\tau(\s)),\quad
\efactor=(-1)^m\mathrm{diag}\Bigl(\sign\Bigr)_{\ul{a}\in\Digit}.
\]
\end{corollary}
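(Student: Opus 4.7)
The plan is to observe that the corollary is essentially a translation of Theorem~\ref{theoremB} into the language of zeta distributions. The work has already been done in the paragraphs preceding the corollary, where the identifications
\[
\bs{Z}(f;\s) = {}^{t\!}J\vlzf{f}{\s}, \qquad
\bs{Z}^*(f;\s) = {}^{t\!}J\vdlzf{f}{\s}
\]
were established. My job is simply to record the consequence carefully.

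First, I would reaffirm that the two identifications above are genuine equalities of column vectors, which requires matching the indexings. The identity $\localZ{f} = \sum_{\ue\in\Ir}\kvsup{\ul{b}\sigma}\lzf{f}{\s}$ shows that, up to the factor $2^{-r/2}$ built into $J$, the quantity $\localZ{f}$ coincides with the entry of ${}^{t\!}J\vlzf{f}{\s}$ indexed by $\ul{b}\sigma \pmod{2} \in \Digit$ in the fixed order $<$. The nonstandard order $\prec_\sigma$ on $\Digit$ was introduced precisely so that the list $(\localZ{f})_{\ul{b}\in\Digit_\sigma}$, read in the order $\prec_\sigma$, reproduces this same sequence. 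The dual side is identical after replacing $\sigma$ by $\sigma_*$ and using that $\sigma_*$ is unimodular.

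With these identifications in hand, the definitions in the corollary become
\[
\widetilde{\bs{Z}}(f;\s) = \Lambda\bigl(\s\sigma - 2^{-1}\ul{p}\bigr){}^{t\!}J\vlzf{f}{\s} = \clzf{f}{\s},
\]
and analogously $\widetilde{\bs{Z}}{}^*(f;\s) = \cdlzf{f}{\s}$. Substituting $\Fourier$ for $f$ on the left-hand side and invoking Theorem~\ref{theoremB} then yields
\[
\widetilde{\bs{Z}}(\Fourier;\s) = \clzf{\Fourier}{\s} = \efactor\,\cdlzf{f}{\tau(\s)} = \efactor\,\widetilde{\bs{Z}}{}^*(f;\tau(\s)),
\]
which is the claim.

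There is no substantial obstacle: the corollary is a corollary in the strictest sense. The only thing demanding attention is the bookkeeping of orderings on $\Digit$, since $\bs{Z}$, $\bs{Z}^*$, and the diagonal matrices $\Lambda$ are indexed over copies of $\Digit$ carrying a priori different orders ($\prec_\sigma$, $\prec_{\sigma_*}$, and $<$, respectively). Once one checks that the conventions introduced before the corollary make these indexings compatible row by row, the functional equation transfers mechanically from Theorem~\ref{theoremB}. All analytic content is already carried by that theorem.
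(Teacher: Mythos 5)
Your proposal is correct and follows exactly the route the paper intends: the paper offers no separate proof for the corollary because the preceding identifications $\bs{Z}(f;\s)={}^{t\!}J\vlzf{f}{\s}$ and $\bs{Z}^*(f;\s)={}^{t\!}J\vdlzf{f}{\s}$ reduce it verbatim to Theorem~\ref{theoremB}. Your careful remarks about the ordering $\prec_\sigma$ and the normalization factor $2^{-r/2}$ hidden in $J$ (which drops out on both sides of the functional equation, so it is harmless) are exactly the bookkeeping points one should check before declaring the reduction trivial.
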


We end this paper by giving a remark on orders of zeta distributions.
In this paper,
we have fixed orders of the basic relative invariants $\Delta_i$, $\Delta^*_j$ of $\Omega$ and $\Omega^*$, respectively, as in the previous paper~\cite{N2018}.
In the case that $\Omega$ is symmetric,
we have the canonical order among $\Delta^*_j$ as in the book~\cite[Chapter VII]{FK94}, which is the opposite order of ours,
and if we use the canonical one,
then we see that there is no necessity to specify an order in $\mathcal{A}$.
For general homogeneous cones $\Omega$,
let us consider a natural generalization of the canonical orders used in symmetric cases, that is,
we take the opposite order among $\Delta^*_j$.
In this case,
the rearranged order of $\Digit_{\sigma_*}$ is equal to $\Digit_{\sigma}$
if and only if we have $A\sigma_*A^{-1}=\sigma$ where $A$ is the anti-diagonal matrix $A=\bigl(\delta_{i,r-j+1}\bigr)_{i,j}$ of size $r$.
Here, the multiplier matrix $\sigma_*$ is determined by using the original order used in~\cite{N2018}.
Note that $\delta_{ij}$ is the Kronecker delta.
However, as in Example~\ref{ex} below,
there exist homogeneous cones such that $A\sigma_*A^{-1}\ne \sigma$,
and therefore,
for general homogeneous cones,
we do not have canonical orders such as 
those of symmetric cones.

\begin{example}
\label{ex}
Let $V$ be a vector space defined by
\[
V:=\set{x=\pmat{
x_{11}I_4&0&\bs{x}_{21}&0\\
0&x_{11}I_4&0&\bs{x}_{31}\\
{}^{t\!}\bs{x}_{21}&0&x_{22}&0\\
0&{}^{t\!}\bs{x}_{31}&0&x_{33}
}}{\begin{array}{l}x_{11},x_{22},x_{33}\in\R,\\
\bs{x}_{21},\bs{x}_{31}\in\R^4\end{array}}\subset \mathrm{Sym}(10,\R).
\]
Then, the set
\[
\Omega:=\set{x\in V}{x\text{ is positive definite}}
\]
is a homogeneous cone of rank $3$
whose structure constants are $n_{21}=n_{31}=4$ and $n_{32}=0$.
The basic relative invariants $\Delta_1,\Delta_2,\Delta_3$ of $\Omega$ are 
\[
\Delta_1(x)=x_{11},\quad
\Delta_2(x)=x_{11}x_{22}-|\bs{x}_{21}|^2,\quad
\Delta_3(x)=x_{11}x_{33}-|\bs{x}_{31}|^2,
\]
and those $\Delta^*_1,\Delta^*_2,\Delta^*_3$ of $\Omega^*$ are
\[
\Delta^*_1(y)=y_{11}y_{22}y_{33}-y_{22}|\bs{y}_{31}|^2-y_{33}|\bs{y}_{21}|^2,\quad
\Delta^*_2(y)=y_{22},\quad
\Delta^*_3(y)=y_{33}.
\]
The multiplier matrices $\sigma$ and $\sigma_*$ are therefore given as
\[\sigma=\pmat{1&0&0\\1&1&0\\1&0&1},\quad
\sigma_*=\pmat{1&1&1\\0&1&0\\0&0&1},
\]
whence we see that
\[
A\sigma_*A^{-1}=\pmat{1&0&0\\0&1&0\\1&1&1}\ne
\sigma.
\]
\end{example}

\section*{Acknowledgments}

This work was supported by Grant-in-Aid for JSPS fellows (2018J00379).
The present author is grateful to Professor Hiroyuki Ochiai for insightful comments for this work.
He also would like to express his sincere gratitude to Professor Kohji Matsumoto for the encouragement and the advice in writing this paper.

\end{document}